\def\R{\mathbb{R}}
\newcommand{\bmat}{\left[\begin{matrix}}
\newcommand{\emat}{\end{matrix}\right]}
\newtheorem{theorem}{Theorem}
\newtheorem{proposition}[theorem]{Proposition}
\newtheorem{lemma}[theorem]{Lemma}
\newtheorem*{corollary}{Corollary}
\numberwithin{theorem}{section}
\numberwithin{equation}{section}
\theoremstyle{remark}
\theoremstyle{definition}
\newcommand{\Z}{\mathbb{Z}}
\DeclareMathOperator{\diam}{diam}
\newcommand{\GL}{\mathrm{GL}}
\newcommand{\SL}{\mathrm{SL}}
\newcommand{\diag}{\mathrm{diag}}
\newcommand{\vb}{\mathbf{b}}
\newcommand{\vc}{\mathbf{c}}
\newcommand{\vk}{\mathbf{k}}
\newcommand{\vm}{\mathbf{m}}
\newcommand{\vn}{\mathbf{n}}
\newcommand{\vx}{\mathbf{x}}
\newcommand{\vy}{\mathbf{y}}
\newcommand{\vu}{\mathbf{u}}
\newcommand{\vv}{\mathbf{v}}
\newcommand{\vw}{\mathbf{w}}
\newcommand{\vbeta}{\boldsymbol\beta}
\newcounter{daggerfootnote}
    \title{Bounds on the gaps in the fractional parts of a linear form}
    \author[Seungki Kim]{Seungki Kim$^\dagger$}
    \thanks{$\dagger$ University of Cincinnati, 2600 Clifton Ave., Cincinnati OH 45221, United States. \\ e-mail: seungki.math@gmail.com}
\begin{document}
\maketitle

\begin{abstract}
We provide bounds on the sizes of the gaps --- defined broadly --- in the set $\{k_1\vbeta_1 + \ldots + k_n\vbeta_n \mbox{ (mod 1)} : k_i \in \Z \cap (0,Q^\frac{1}{n}]\}$ for generic $\vbeta_1, \ldots, \vbeta_n \in \R^m$ and all sufficiently large $Q$. We also introduce a related problem in Diophantine approximation, which we believe is of independent interest.
\end{abstract}







\section{Introduction and the statements of results}

\subsection{A brief introduction}

The three-gap theorem states that for any $\alpha \in \R$ and $Q > 0$, the set
\begin{equation*}
K(\alpha; Q) = \{k\alpha \mbox{ (mod 1)} : k \in \Z \cap (0,Q]\}
\end{equation*}
partitions $\R/\Z$ into intervals of at most three distinct lengths. This simple and attractive statement has caught the attention of a number of mathematicians for several decades, and continues to stimulate new research to this day (see e.g. \cite{MSthree} and the references therein; see also \cite{DH23, HM20, HM22}).

The purpose of the present paper is to propose and explore the following simple, natural curiosity: what can we say about those lengths? More precisely, can we prove any asymptotics or estimates as $Q \rightarrow \infty$?

The part of the literature most closely related to this question seems to be that of the discrepancy function of a Kronecker sequence. Given $\vbeta \in \R^m$ and the associated Kronecker sequence
\begin{equation*}
K(\vbeta;Q) = \{k\vbeta \mbox{ (mod 1)} : k \in \Z \cap (0,Q]\},
\end{equation*}
the \emph{discrepancy function} $\Delta$ is defined to be
\begin{equation*}
\Delta(\vbeta;Q) = \max_{1 \leq q \leq Q \atop 0 \leq x_1, \ldots, x_m \leq 1} \Big| qx_1 \cdots x_m - \big|K(\vbeta;q) \cap \prod_{1 \leq i \leq m} [0,x_i)\big| \Big|.
\end{equation*}
Regarding the size of $\Delta$, there is a beautiful theorem of Beck \cite[Theorem 1]{Beck} stating that, for any positive increasing function $\varphi: \R \rightarrow \R$,
\begin{equation} \label{eq:Beck}
\Delta(\vbeta;Q) \ll (\log Q)^m \cdot \varphi(\log \log Q) \Leftrightarrow \sum_{j=1}^\infty \frac{1}{\varphi(j)} < \infty
\end{equation}
for almost every $\vbeta$.
As a crude application of this result, it is possible to prove, for example, that the maximal size of the ``gap'' --- let us provisionally define it as the volume of a hypercube in $\R^m/\Z^m$ not intersecting $K(\vbeta;Q)$ --- is at most of order
\begin{equation*}
Q^{-1} \cdot (\log Q)^m \cdot \varphi(\log \log Q),
\end{equation*}
for any positive increasing $\varphi$ satisfying the right-hand side of \eqref{eq:Beck}. It seems this is also the best bound obtainable this way, since \eqref{eq:Beck} does not yield meaningful information when $Qx_1 \cdots x_m = o\left((\log Q)^m \cdot \varphi(\log \log Q)\right)$.

In this paper, we show, among other things, that the above bound can be improved to
\begin{equation*}
Q^{-1} \cdot \log Q \cdot \varphi(\log \log Q).
\end{equation*}
In fact, we consider our topic in a much broader generality, in terms of both the sequences $K$ and the notion of the gap. We provide the details in the next section.

\subsection{A bit of setup, and the main results} \label{sec:intro_setup}

Much of our setup closely follows that of the recent works related to the three-gap theorem, such as Marklof-Str\"ombergsson \cite{MSthree} and Haynes-Marklof \cite{HM20,HM22}. The definitions below are adapted from these works, and then extended to  accommodate general linear forms and various notions of gaps.

Fix integers $m,n > 0$. For an $m \times n$ matrix
\begin{equation*}
B = \begin{pmatrix} | & & | \\ \vbeta_1 & \cdots & \vbeta_n \\ | & & | \end{pmatrix}
\end{equation*}
and a positive real number $Q$, write
\begin{equation*}
K(B;Q) = \{B\vk \mbox{ (mod 1)} : \vk \in \Z^n \cap (0, Q^\frac{1}{n}]^n\}.
\end{equation*}
Choose also $p \in [1,\infty]$ and an open subset $S$ of the unit sphere in $\R^m$. The variables $m,n, p, S$ may be considered fixed throughout this paper.

Let $d = m+n$ and $\mathrm{cone}(S) = \{c\vu : \vu \in S, c > 0\}$. For $T \subseteq \R^m$, write
\begin{equation*}
{\inf}_p^S T := \inf\{ \|\vy\|_p : \vy \in T \cap \mathrm{cone}(S) \},
\end{equation*}
the infimum of the $L^p$-norm of the vectors in $T$ in the direction of $S$.
In addition, for $\vv \in \R^m$, define
\begin{equation} \label{eq:gap2}
\gamma_{(B;Q)}(\vv) = {\inf}_p^S\{ B\vn + \vm - \vv \in \R^m : (\vn, \vm) \in \Z^d, \vn \in (0, Q^\frac{1}{n}]^n\},
\end{equation}
the distance from $\vv$ (mod $1$) to the nearest point in $K(B;Q)$ in the direction of $S$ with respect to the $L^p$-norm.

Our question is then whether we can give any size estimate on $\gamma_{(B;Q)}(\vv)$ as $Q \rightarrow \infty$. Of particular interest may be when $\vv$ is restricted to the elements of $K(B;Q)$. We are able to prove the following bounds.

\begin{theorem} \label{thm:main1}
Suppose $f: \R_{>0} \rightarrow \R_{>0}$ is an increasing function such that
\begin{equation*}
\sum_{k=1}^\infty \frac{1}{f(k)} < \infty.
\end{equation*}
Then the following statements hold.
\begin{enumerate}[(a)]
\item For almost all $m \times n$ matrix $B$, and all sufficiently large $Q > 0$,
\begin{equation*}
\sup_{\vv \in K(B;Q)} \gamma_{(B;Q)}(\vv) \leq \sup_{\vv \in \R^m} \gamma_{(B;Q)}(\vv) < Q^{-\frac{1}{m}}\left(f(\log Q)\right)^\frac{1}{m}.
\end{equation*}

\item If we further assume that $f(x)$ grows at most polynomially in $x$, then also
\begin{equation*}
Q^{-\frac{1}{m}}\left(f(\log Q)\right)^{-\frac{1}{m^2}} < \sup_{\vv \in \R^m} \gamma_{(B;Q)}(\vv).
\end{equation*}

\item Under the assumptions of (a) and (b) above,
\begin{equation*}
Q^{-\frac{1}{m}}\left(f(\log Q)\right)^{-\frac{1}{m}} < \inf_{\vv \in K(B;Q)} \gamma_{(B;Q)}(\vv).
\end{equation*}
If in addition $S \cup (-S)$ covers the entire unit sphere, then
\begin{equation*}
\inf_{\vv \in K(B;Q)} \gamma_{(B;Q)}(\vv) \ll_{d,p} Q^{-\frac{1}{m}}.
\end{equation*}

\end{enumerate}
\end{theorem}

Our proof of the above bounds starts by translating the problem to the space of lattices \`a la Marklof-Str\"ombergsson \cite{MSthree}. Then methods in the geometry of numbers show that the size of $\gamma_{(B;Q)}$ are controlled by the successive minima of some $d$-dimensional lattices. The growth and decay of the successive minima are then in turn controlled by the zero-one law due to Kleinbock-Margulis \cite{KM98} (see also \cite[Proposition 1.11]{KS22}).

One may reasonably be curious about other bounds of the above kind. Following a suggestion of an anonymous referee, we attempted to give a sharper lower bound on $\sup_{\vv \in K(B;Q)} \gamma_{(B;Q)}(\vv)$ and an upper bound on $\inf_{\vv \in K(B;Q)} \gamma_{(B;Q)}(\vv)$, but the only success we had was with the latter case in which $S$ is rather large, as stated in Theorem \ref{thm:main1}(c) above; see the footnotes in Section 3 for brief comments on the difficulties involved. On the other hand, note that $\inf_{\vv \in \R^m} \gamma_{(B;Q)}(\vv) = 0$ trivially.

Theorem \ref{thm:main1} has an application to the following problem in Diophantine approximation, which we believe is of independent interest. In fact, this was the problem we started investigating at first, but we subsequently found that it is closely related to the gap sizes in $K(B;Q)$, which led us to Theorems \ref{thm:main1}.

\begin{corollary}
Let $B$ be an $m \times n$ real-valued matrix, and $N > 0$ be an integer. Consider the inequalities
\begin{equation} \label{eq:diophantine}
\|B\vn - \vm\|_\infty^m < N^{m}\varphi(Q) \mbox{ and } \|\vn\|_\infty^n < (N/2)^nQ,
\end{equation}
where $\vm \in \Z^m, \vn \in \Z^n$, and $\varphi:\R_{>0} \rightarrow \R_{>0}$ is a decreasing function.

Then for almost every $B$, and for all $N$ and all $Q \gg_{B,N} 0$, \eqref{eq:diophantine} has a solution $(\vm,\vn) \in \Z^d$ in \emph{every} congruence class of $\Z^d$ modulo $N$, if
\begin{equation*}
\varphi(Q) \geq Q^{-1}f(\log Q)
\end{equation*}
for $f$ as in the statement of Theorem \ref{thm:main1}.

On the other hand, if
\begin{equation*}
\varphi(Q) \leq Q^{-1}\left(f(\log Q)\right)^{-\frac{1}{m}}
\end{equation*}
for some $f$ as in Theorem \ref{thm:main1}(b), then for $Q \gg_{B,f} 0$ and $N \gg_Q 0$, \eqref{eq:diophantine} fails to have a solution $(\vm,\vn) \in \Z^d$ in some congruence class of $\Z^d$ modulo $N$.
\end{corollary}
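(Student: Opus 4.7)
The plan is to convert the Diophantine problem into a question about the gap function $\gamma_{(B;Q)}$, specialized to $p = \infty$ and $S$ the full unit sphere of $\R^m$, and then invoke Theorem~\ref{thm:main2}. The reduction goes as follows. Given a congruence class $(\vm_0, \vn_0) \in \Z^d/N\Z^d$, any candidate $(\vm, \vn) \equiv (\vm_0, \vn_0) \pmod N$ can be written as $(\vm, \vn) = (N\vm' + \vm_0, N\vn' + \vn_0)$, and a direct computation gives $B\vn - \vm = N(B\vn' - \vm' - \vv)$ with $\vv := (\vm_0 - B\vn_0)/N$. So the first inequality in \eqref{eq:diophantine} becomes $\|B\vn' - \vm' - \vv\|_\infty < \varphi(Q)^{1/m}$, while the second constrains $\vn'$ to an integer box of side $\approx Q^{1/n}$ centered near the origin. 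Translating $\vn'' = \vn' + c\mathbf{1}$ by an integer $c \approx Q^{1/n}/2$ identifies this centered box with $(0, Q^{1/n}]^n \cap \Z^n$ (the one used to define $\gamma_{(B;Q)}$), at the cost of replacing the target point by $\tilde\vv := \vv + Bc\mathbf{1}$; this is harmless since $\gamma_{(B;Q)}$ is $\Z^m$-periodic in its argument. Thus solving \eqref{eq:diophantine} in the given class is equivalent to $\gamma_{(B;Q)}(\tilde\vv) < \varphi(Q)^{1/m}$.

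For the upper bound direction, assuming $\varphi(Q) \geq Q^{-1} f(\log Q)$, Theorem~\ref{thm:main2}'s first statement at $Q$ gives $\max_\vv \gamma_{(B;Q)}(\vv) < Q^{-1/m} f(\log Q)^{1/m} \leq \varphi(Q)^{1/m}$ for a.e.\ $B$ and $Q \gg_{B,N} 0$, and the reduction delivers the solution in every class.

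For the lower bound direction, assuming $\varphi(Q) \leq Q^{-1} f(\log(Q\log Q))^{-1/m}$ (so that $\varphi^{1/m} \leq T_Q := Q^{-1/m} f(\log(Q\log Q))^{-1/m^2}$), I will argue by contradiction. If a solution exists in every class, then varying $(\vm_0, \vn_0)$ (say, fixing $\vn_0 = 0$ and letting $\vm_0$ range over $\{0, \ldots, N-1\}^m$) makes $\tilde\vv$ a $1/N$-dense subset of $\R^m/\Z^m$; the $1$-Lipschitz property of $\gamma_{(B;Q)}(\cdot)$ then forces $\sup_\vv \gamma_{(B;Q)}(\vv) \leq \varphi(Q)^{1/m} + 1/N \leq T_Q + 1/N$. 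On the other hand, Theorem~\ref{thm:main2}'s second statement applied at $Q$ with $f$ replaced by $\tilde f := f/C$ for a fixed constant $C > 1$ (still satisfying $\sum 1/\tilde f < \infty$ and $\tilde f = O(x^{1+\varepsilon})$) yields $\sup_\vv \gamma_{(B;Q)}(\vv) > C^{1/m^2} T_Q$. The two inequalities are incompatible once $N > (C^{1/m^2}-1)^{-1} T_Q^{-1} \asymp Q^{1/m}$, matching the quantifier ``$N \gg_Q 0$'' in the corollary.

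The hard part is the narrow margin in the lower bound direction: a direct application of Theorem~\ref{thm:main2} with the same $f$ would give only $\sup \gamma > T_Q \geq \varphi^{1/m}$, with no room to absorb the $1/N$-density error. The trick of replacing $f$ by $f/C$ injects a multiplicative boost $C^{1/m^2} > 1$ into the gap lower bound and thereby creates a margin of size $\asymp Q^{-1/m}$, which $1/N$ can beat. The various $O(1)$ discrepancies that appear in the reduction (parity of $\lfloor Q^{1/n}\rfloor$, the $L^\infty$-density of the rational grid $\vm_0/N$, and sign conventions relating $\gamma$'s definition to \eqref{eq:diophantine}) are routine and absorbed into the hypothesis ``$Q \gg 0$''.
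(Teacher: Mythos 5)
Your overall strategy---reduce solvability in a fixed congruence class to an inequality on $\gamma_{(B;Q)}$ (with $p=\infty$, $S$ the full sphere) and then invoke Theorem~\ref{thm:main2}---is the same as the paper's, but the execution differs in two places. First, the paper packages the reduction as Proposition~\ref{prop:cdn}: it reformulates \eqref{eq:diophantine} as containment of $(N^{-1}\Z)^d/\Z^d$ in a box on the torus $\R^d/\Z^d$, then shows this is equivalent to the $m$-dimensional set \eqref{eq:oink} covering $\R^m/\Z^m$, which in turn is equivalent to $\sup_\vv \gamma_{(B;Q)}(\vv) \leq \varphi(Q)^{1/m}$; you instead parametrize each residue class directly and translate the box $\vn' \in (-Q^{1/n}/2, Q^{1/n}/2)^n$ to $(0,Q^{1/n}]^n$. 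Second, and more substantively, for the lower-bound direction the paper's route is purely geometric: if \eqref{eq:oink} misses a point of $\R^m/\Z^m$, then since \eqref{eq:oink} is closed its complement is open, and since over an open set of $\vx$ the fibers of \eqref{eq:cdn} are exact translates of \eqref{eq:oink}, the torus box \eqref{eq:cdn} misses an open set of $\R^d/\Z^d$, which for $N \gg 0$ must contain a point of $(N^{-1}\Z)^d/\Z^d$. No margin argument is needed. Your route instead uses the $1$-Lipschitz property of $\gamma_{(B;Q)}$ together with the $1/N$-density of the shifted grid $\{\vm_0/N\}$, and manufactures the needed margin by applying Theorem~\ref{thm:main2} to $\tilde f = f/C$; this is correct (the hypotheses on $f$ are scale-invariant, and $N \gg_Q 0$ absorbs the $1/N$-error against the $\asymp Q^{-1/m}$ gap). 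Both arguments work; the paper's is more robust against the boundary issues that you wave off as routine. In particular, the containment $(0,Q^{1/n}]^n \cap \Z^n - c\ve{1} \subseteq (-Q^{1/n}/2, Q^{1/n}/2)^n$ (or its reverse) for an integer $c$ is sensitive to the parity of $\lfloor Q^{1/n}\rfloor$, which does not stabilize as $Q \to \infty$, so you would need to choose $c$ (and possibly which direction of containment you use) depending on $Q$; the paper's fiber computation in the proof of Proposition~\ref{prop:cdn} makes this bookkeeping explicit, noting that the fibers $\mathcal F_\vx$ agree with \eqref{eq:oink} exactly only for $\vx'$ in a specific subinterval.
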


It may be of interest to compare this statement with other recent works on Diophantine approximation with congruence conditions. Nesharim-R\"uhr-Shi \cite{NRS20} proved a refined version of the classical Khintchine-Groshev theorem in which the solutions are restricted to a fixed congruence class; Alam-Ghosh-Yu \cite{AGY21} improved it further, by providing an asymptotic formula on the number of such solutions. Both are results in what is sometimes referred to as \emph{asymptotic approximation}, whereas ours is a result in \emph{uniform approximation}. Of course, its main drawback is that it does not provide a strict zero-one law, reflecting the gap in our estimates of $\sup_{\vv \in \R^m} \gamma_{(B;Q)}(\vv)$.

\subsection{Organization}

We collect the results needed for later arguments in Section 2. Section 3 proves Theorem \ref{thm:main1} and its corollary.

\subsection{Acknowledgment}

We thank the anonymous referees for the various comments and suggestions that helped improve the present paper in a number of ways.

\section{Preliminaries}

\subsection{Setup}

We continue to use the notations defined in Section \ref{sec:intro_setup} throughout this paper. In this section, we adopt the idea of Marklof-Str\"ombergsson \cite{MSthree} to phrase our gap problem in terms of a geodesic flow on the space of lattices. The same framework also appears in Haynes-Marklof \cite{HM20,HM22} as well.

Write
\begin{equation*}
U_B = \begin{pmatrix} \mathrm{Id}_n &  \\ B & \mathrm{Id}_m \end{pmatrix},
A_Q = \begin{pmatrix} Q^{-\frac{1}{n}}\mathrm{Id}_n &  \\  & Q^{\frac{1}{m}}\mathrm{Id}_m \end{pmatrix}.
\end{equation*}
Then $\gamma_{(B;Q)}$ defined in \eqref{eq:gap2} can be reformulated as
\begin{align*}
\gamma_{(B;Q)}(\vv)
&= {\inf}_p^S\{ B\vn + \vm - \vv \in \R^m : (\vn, \vm) \in \Z^d, \vn \in (0, Q^\frac{1}{n}]^n \} \\
&= {\inf}_p^S\{ \vy - \vv \in \R^m : (\vx, \vy) \in U_B\Z^d, \vx \in (0, Q^\frac{1}{n}]^n \} \\
&= Q^{-\frac{1}{m}} {\inf}_p^S\{ \vy - Q^\frac{1}{m}\vv \in \R^m : (\vx, \vy) \in A_QU_B\Z^d, \vx \in (0, 1]^n \} \\
&= Q^{-\frac{1}{m}}\Phi(A_QU_B, Q^\frac{1}{m}\vv),
\end{align*}
where
\begin{equation*}
\Phi(M, \vw) = {\inf}_p^S\{ \vy - \vw \in \R^m : (\vx, \vy) \in M\Z^d, \vx \in (0,1]^n\}
\end{equation*}
for $M \in \GL(d,\R)$ and $\vw \in \R^m$. We leave open the possibility that $\Phi(M,\vw) = \infty$.

\subsection{Results from the geometry of numbers}
By the word \emph{lattice} we mean a finitely generated discrete $\Z$-submodule of a Euclidean space. Unless we mention otherwise, we assume the lattice is of the full rank, that is, its rank is the same as the dimension of its ambient space.
For a lattice $L \subseteq \R^d$ and $i = 1, 2, \ldots, d$, recall that its \emph{$i$-th successive minimum} $\lambda_i(L)$ is the smallest real number such that the elements of $L$ of length (with respect to the $L^2$-norm) less than or equal to $\lambda_i(L)$ span a subspace of dimension at least $i$. For a $d \times d$ matrix $M$, let us write $\lambda_i(M)$ for the $i$-th minimum of the lattice generated by the columns of $M$.

The following theorem plays a crucial role in bounding $\gamma_{(B;Q)}$ from above. 
It and the two lemmas that follow are widely known: in fact, it can be seen as a rephrasing of Babai's nearest plane algorithm (see e.g., \cite[Chapter 2.2]{Pre15} and the references therein); also, \cite[Lemma 2.1]{Str11} gives the identical statement, except that it differs from ours by an exponential factor in $d$. 
Here we provide a formulation that is convenient for our usage, and supply a proof for completeness.

\begin{theorem} \label{thm:existence}
Let $L \subseteq \R^d$ be a (full-rank) lattice. Then any set containing a closed ball of diameter (with respect to the $L^2$-norm) $\sqrt{d}\lambda_d(L)$ contains a point of $L$.
\end{theorem}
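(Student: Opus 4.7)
The plan is to reduce the statement to the covering-radius bound: for every $\vx \in \R^d$ there exists $\vy \in L$ with $\|\vx - \vy\| \leq \frac{\sqrt{d}}{2}\lambda_d(L)$, where $\|\cdot\|$ denotes the $L^2$-norm. A closed ball of diameter $\sqrt{d}\lambda_d(L)$ has radius $\frac{\sqrt{d}}{2}\lambda_d(L)$, so once this bound is established, every such ball --- and hence every set containing one --- meets $L$.

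First I would fix linearly independent vectors $\vv_1, \ldots, \vv_d \in L$ with $\|\vv_i\| = \lambda_i(L)$, which exist by the very definition of the successive minima. Although $\vv_1, \ldots, \vv_d$ need not form a $\Z$-basis of $L$, they span $\R^d$ as a real vector space. Running Gram--Schmidt on them produces an orthogonal system $\vv_1^*, \ldots, \vv_d^*$ satisfying $\vv_i = \vv_i^* + \sum_{j<i}\mu_{ij}\vv_j^*$, and the relation $\|\vv_i\|^2 = \|\vv_i^*\|^2 + \sum_{j<i}\mu_{ij}^2\|\vv_j^*\|^2$ yields $\|\vv_i^*\| \leq \|\vv_i\| = \lambda_i(L) \leq \lambda_d(L)$ for every $i$.

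Given an arbitrary $\vx \in \R^d$, I would construct a nearby lattice vector $\vy \in \Z\vv_1 + \cdots + \Z\vv_d \subseteq L$ by size-reducing the Gram--Schmidt coefficients of the residual one index at a time, in the order $i = d, d-1, \ldots, 1$. At step $i$, choose an integer multiple of $\vv_i$ to add to $\vy$ so that the coefficient of $\vv_i^*$ in $\vx - \vy$ lies in $[-\tfrac{1}{2},\tfrac{1}{2}]$. The key combinatorial point is that $\vv_i$ has no component along $\vv_j^*$ for $j > i$, so the adjustment at step $i$ leaves untouched every coefficient fixed at a previous (larger-index) step. After all $d$ rounds, writing $\vx - \vy = \sum_{i=1}^d c_i \vv_i^*$ with $|c_i| \leq \tfrac{1}{2}$, orthogonality gives
\begin{equation*}
\|\vx - \vy\|^2 = \sum_{i=1}^d c_i^2 \|\vv_i^*\|^2 \leq \frac{1}{4}\sum_{i=1}^d \lambda_d(L)^2 = \frac{d}{4}\lambda_d(L)^2,
\end{equation*}
which is the desired bound.

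I do not anticipate a serious obstacle: the argument is the familiar ``size-reduction'' procedure from lattice reduction theory, applied here not to a lattice basis but to the (possibly non-basis-forming) vectors achieving the successive minima. The only care needed is in the Gram--Schmidt bookkeeping, to confirm that reducing coefficient $i$ does not undo any reduction performed at a larger index --- which is immediate from the triangularity of the change-of-basis matrix relating $\vv_1, \ldots, \vv_d$ to $\vv_1^*, \ldots, \vv_d^*$.
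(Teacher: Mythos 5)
Your proof is correct, and at bottom it is the same argument as the paper's. The paper takes the vectors $\vv_1,\ldots,\vv_d$ achieving the successive minima, writes the corresponding matrix as $KAN$ with $N$ unipotent upper-triangular and $A=\diag(a_1,\ldots,a_d)$ satisfying $a_i\le\|\vv_i\|=\lambda_i(L)$, and concludes that the sublattice $L'=\Z\vv_1+\cdots+\Z\vv_d$ tiles $\R^d$ by a rectangle of diameter $\sqrt{a_1^2+\cdots+a_d^2}\le\sqrt{d}\,\lambda_d(L)$ (Lemma 2.2), and then observes that such a tiling forces any ball of that diameter to meet $L'\subseteq L$ (Lemma 2.3). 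Your $a_i=\|\vv_i^*\|$ and your size-reduction loop (index $d$ down to $1$) is exactly the induction hiding in the paper's ``tiling by the unit cube'' claim for the lattice generated by $N$; the fact that adding a multiple of $\vv_i$ does not disturb the coefficients of $\vv_j^*$ for $j>i$ is the triangularity that makes that tiling work. The one genuine difference is one of packaging: you skip the tiling formulation entirely and directly bound the covering radius of $L'$ by $\tfrac{\sqrt{d}}{2}\lambda_d(L)$, which makes the proof self-contained in a single pass and avoids the need for a separate statement like Lemma 2.3. That is a modest simplification of the exposition but not a new idea; the central inequalities ($\|\vv_i^*\|\le\lambda_i(L)\le\lambda_d(L)$ and $\sum_i \tfrac14\|\vv_i^*\|^2\le\tfrac d4\lambda_d(L)^2$) are identical.
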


This is an immediate consequence of the two lemmas below.

\begin{lemma} \label{lemma:lattice1}
Every lattice $L \subseteq \R^d$ admits a covering by rectangles of diameter $\sqrt{d}\lambda_d(L)$.
\end{lemma}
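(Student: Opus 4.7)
The plan is to build an explicit rectangular fundamental domain of a sublattice whose diameter is controlled by $\lambda_d(L)$. First, pick linearly independent $\vv_1, \ldots, \vv_d \in L$ achieving the successive minima, i.e.\ $\|\vv_i\|_2 = \lambda_i(L)$, and apply Gram--Schmidt orthogonalization to produce mutually orthogonal vectors $\vv_1^*, \ldots, \vv_d^*$ with
\begin{equation*}
\vv_i = \vv_i^* + \sum_{j<i}\mu_{ij}\vv_j^*, \qquad \|\vv_i^*\| \leq \|\vv_i\| = \lambda_i(L) \leq \lambda_d(L).
\end{equation*}
Next, define the centered rectangle
\begin{equation*}
R = \Big\{\sum_{i=1}^d t_i\vv_i^* : |t_i| \leq 1/2 \Big\},
\end{equation*}
which, since the $\vv_i^*$ are orthogonal, is a rectangular box with diameter
\begin{equation*}
\Big(\sum_{i=1}^d \|\vv_i^*\|^2\Big)^{1/2} \leq \sqrt{d}\,\lambda_d(L).
\end{equation*}

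I then claim the translates $\{R + \vl : \vl \in L'\}$, where $L' = \Z\vv_1 + \cdots + \Z\vv_d \subseteq L$, cover $\R^d$; since $0 \in R$, each such translate contains the lattice point $\vl \in L$, yielding the desired covering of $L$ by rectangles of the stated diameter. To prove the covering property, given any $\vp = \sum_j c_j\vv_j^* \in \R^d$, I would determine integers $n_d, n_{d-1}, \ldots, n_1$ by a greedy rounding in Gram--Schmidt coordinates: having chosen $n_d, \ldots, n_{i+1}$, set $n_i$ to be the nearest integer to $c_i - \sum_{j>i} n_j\mu_{ji}$, so that each Gram--Schmidt coordinate of $\vp - \sum_i n_i\vv_i$ has absolute value $\leq 1/2$. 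This places $\vp - \sum_i n_i\vv_i$ inside $R$, so $\vp \in R + \vl$ with $\vl = \sum_i n_i\vv_i \in L'$.

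The only subtlety worth flagging is the bookkeeping in the Gram--Schmidt expansion: one must carry out the rounding from the highest index downward so that when $n_i$ is being chosen, the quantities $\sum_{j>i} n_j \mu_{ji}$ are already fixed. This is routine, but it is the only place one could easily slip up. Once the covering is established, Theorem~\ref{thm:existence} will then follow by noting that any closed ball of diameter $\sqrt{d}\,\lambda_d(L)$ that contains the center of some translate $R + \vl$ necessarily contains $\vl$ itself, which I expect will be established in the companion lemma that is not yet visible in the excerpt.
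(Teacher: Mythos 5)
Your proposal is correct and matches the paper's argument in substance: the paper also selects $\vv_1, \ldots, \vv_d$ realizing the successive minima, forms the sublattice $L'$ they span, and orthogonalizes (phrased via the Iwasawa decomposition $KAN$ rather than Gram--Schmidt, but these are the same thing here since $\vv_i^* = a_i \vk_i$), then covers $\R^d$ by $L'$-translates of a rectangle of diameter $\bigl(\sum a_i^2\bigr)^{1/2} \leq \sqrt{d}\,\lambda_d(L)$. The only cosmetic difference is in verifying the covering: the paper invokes an inductive tiling claim for the lattice generated by the columns of a unipotent upper-triangular matrix, while you carry out the equivalent greedy back-substitution rounding directly in Gram--Schmidt coordinates. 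Both are correct and amount to the same computation.
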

\begin{proof}
Choose linearly independent $\vv_1, \ldots \vv_d \in L$ such that $\|\vv_i\|_2 = \lambda_i(L)$. The $\vv_i$'s span a sublattice $L'$ of $L$ of finite index. We will demonstrate a tiling of $\R^d$ by the $L'$-translates of the rectangles of diameter at most $\sqrt{d}\lambda_d(L)$, which will prove the lemma.

It is a well-known fact that the lattice generated by the column vectors of a $d \times d$ upper-triangular unipotent matrix
\begin{equation*}
\begin{pmatrix}
1 & * & *          & \cdots & * \\
   & 1 & *          & \cdots & *     \\
   &    & \ddots & \vdots & \vdots   \\
   &    &           &   1       &  * \\
   &    &           &           &  1
\end{pmatrix}
\end{equation*}
admits a tiling of $\R^d$ by the $d$-dimensional unit cube $[-1/2,1/2]^d$. One quick way to see this is as follows: denote by $\vc_i$ the $i$-th column vector of the above matrix, by $G$ the lattice generated by $\{\vc_1,\ldots,\vc_d\}$, and by $G'$ the (corank $1$) sublattice of $G$ generated by $\{\vc_1,\ldots,\vc_{d-1}\}$. Then since
\begin{equation*}
G = \coprod_{k \in \Z} (G' + k\vc_d),
\end{equation*}
and the last coordinate of $\vc_d$ is $1$, the problem reduces to tiling $\R^{d-1}$ by the $G'$-translates of the $(d-1)$-dimensional unit cube. One can now invoke induction to prove the claim.

Back to the lattice $L'$ generated by the $\vv_i$'s, the matrix whose $i$-th column is $\vv_i$ has the Iwasawa decomposition $KAN$, where $K \in \mathrm{O}(d,\R)$, $A = \diag(a_1,\ldots,a_d)$ with $0 < a_i \leq \|\vv_i\| = \lambda_i(L)$, and $N$ is upper-triangular unipotent. By the above discussion, the lattice generated by the columns of $N$ admits a tiling by the unit cube. Applying $KA$ to it, we see that $L'$ admits a tiling by a rectangle of diameter $\sqrt{a_1^2 + \ldots + a_d^2} \leq \sqrt{d}\lambda_d(L)$, as desired.
\end{proof}

\begin{lemma} \label{lemma:lattice2}
Let $L \subseteq \R^d$ be a lattice, and $C \subseteq \R^d$ be a convex, centrally symmetric closed set such that its $L$-translates cover $\R^d$. Then any $D \subseteq \R^d$ containing a closed ball of diameter $\diam(C)$ contains a point of $L$.
\end{lemma}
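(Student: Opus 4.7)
The plan is to exploit central symmetry and the covering hypothesis in sequence. Assume without loss of generality that $C$ is centrally symmetric about the origin. Let $B(\vx_0, r) \subseteq D$ be the closed ball of diameter $\diam(C) = 2r$ whose existence is hypothesized.

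My first observation would be the elementary fact that for a centrally symmetric convex set $C$, one has $\diam(C) = 2 \sup_{\vc \in C} \|\vc\|_2$. Indeed, any $\vc \in C$ satisfies $-\vc \in C$ as well, so $2\|\vc\|_2 = \|\vc - (-\vc)\|_2 \leq \diam(C)$; conversely, for any $\vc_1, \vc_2 \in C$ the triangle inequality gives $\|\vc_1 - \vc_2\|_2 \leq \|\vc_1\|_2 + \|\vc_2\|_2 \leq \diam(C)$. So every element of $C$ lies within distance $r$ of the origin.

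Now I would use the covering property: since $\R^d$ is covered by $L$-translates of $C$, there is some $\boldsymbol{\ell} \in L$ with $\vx_0 \in \boldsymbol{\ell} + C$, i.e., $\vx_0 - \boldsymbol{\ell} \in C$. By the preceding paragraph $\|\vx_0 - \boldsymbol{\ell}\|_2 \leq r$, which says precisely that $\boldsymbol{\ell} \in B(\vx_0, r) \subseteq D$. Hence $D$ contains the lattice point $\boldsymbol{\ell}$, and the lemma is proved.

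There is no serious obstacle here; the whole argument is essentially one line once central symmetry is translated into a norm bound. The only thing to watch for is that the covering need not be a tiling and that $C$ need not be bounded away from an inscribed ball — but neither of these matters, because we only need one translate of $C$ containing $\vx_0$ and we only use the $L^2$-norm bound on points of $C$, which follows from symmetry alone.
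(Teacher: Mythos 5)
Your proof is correct and is essentially the same as the paper's, with two cosmetic improvements: you argue directly rather than by contradiction (the paper assumes $L\cap D=\emptyset$, invokes discreteness to get a uniform $\varepsilon$, then contradicts the covering hypothesis), and you correctly observe that convexity and closedness of $C$ are superfluous --- only central symmetry is needed to conclude $\|\vc\|_2\le\tfrac12\diam(C)$ for all $\vc\in C$. One small slip: in establishing $\diam(C)=2\sup_{\vc\in C}\|\vc\|_2$, the final inequality in your ``conversely'' chain should read $\|\vc_1\|_2+\|\vc_2\|_2\le 2\sup_{\vc\in C}\|\vc\|_2$, not $\le\diam(C)$; in any case only the first direction ($2\|\vc\|_2\le\diam(C)$) is actually used, so the argument is unaffected.
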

\begin{proof}
It suffices to assume $D$ is a closed ball of diameter $\diam(C)$. Suppose $L \cap D$ is empty. Then there exists an $\varepsilon > 0$ such that every point of $L$ is at least $\frac{1}{2}\diam(C) + \varepsilon$ units of distance away from the center of $D$. But this contradicts the assumption that the translates of $C$ by the points of $L$ cover $\R^d$.
\end{proof}

\begin{proof}[Proof of Theorem \ref{thm:existence}]
By Lemma \ref{lemma:lattice1}, $\R^d$ is covered by the $L$-translates of a rectangle $C$ of diameter $\sqrt{d}\lambda_d(L)$. Applying Lemma \ref{lemma:lattice2} with this $C$ completes the proof.
\end{proof}

The two lemmas below will be used in Section 4 to give a lower bound on $\sup_{\vv \in \R^m}\gamma_{(B;Q)}(\vv)$.

\begin{lemma} \label{prop:LLL}
Let $L \subseteq \R^d$ be a lattice. Then there exists a subspace $S_m \subseteq \R^d$ of dimension $m-1$ such that $L$ is contained in a disjoint union of the translates of $S_m$. Furthermore, the ($L^2$-)distance between any two of those translates is at least $c_1(d)\lambda_m(L)$, for a constant $c_1(d) > 0$ depending only on $d$.
\end{lemma}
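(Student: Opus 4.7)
My plan is to construct $S_m$ as the span of the first $m-1$ vectors of a ``reduced'' basis of $L$. Concretely, I would take a Korkine-Zolotarev reduced basis $\vb_1,\ldots,\vb_d$ of $L$ (an LLL-reduced basis would work too, at a minor cost in the constant), denote by $\vb_1^*,\ldots,\vb_d^*$ its Gram-Schmidt orthogonalization, and set
\begin{equation*}
S_m := \mathrm{span}_\R(\vb_1,\ldots,\vb_{m-1}) = \mathrm{span}_\R(\vb_1^*,\ldots,\vb_{m-1}^*),
\end{equation*}
an $(m-1)$-dimensional linear subspace of $\R^d$.

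\textbf{Decomposition into affine slices.} Since $\vb_1,\ldots,\vb_{m-1}\in L\cap S_m$ are linearly independent, the sublattice $L\cap S_m$ has full rank in $S_m$. Hence the orthogonal projection $\pi:\R^d\to S_m^{\perp}$ carries $L$ to a discrete lattice $\pi(L)\subseteq S_m^{\perp}$ of rank $d-m+1$, and one obtains the partition
\begin{equation*}
L = \bigsqcup_{\vw\in\pi(L)} \bigl(L\cap\pi^{-1}(\vw)\bigr),
\end{equation*}
each piece lying inside the affine translate $\pi^{-1}(\vw)$ of $S_m$. This yields the first assertion of the lemma.

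\textbf{Distance bound (the main step).} The distance between distinct translates $\pi^{-1}(\vw_1)$ and $\pi^{-1}(\vw_2)$ equals $\|\vw_1-\vw_2\|_2$, so the minimum separation across distinct translates is exactly $\lambda_1(\pi(L))$. A direct check gives $\pi(\vb_m)=\vb_m^*$, and the defining property of a KZ-reduced basis is precisely that this vector realizes the shortest nonzero element of $\pi(L)=\pi_m(L)$; hence $\lambda_1(\pi(L))=\|\vb_m^*\|$. The principal obstacle is therefore to produce a clean lower bound for $\|\vb_m^*\|$ in terms of $\lambda_m(L)$, for which I would invoke the classical inequality $\|\vb_m^*\|\geq c(d)\,\lambda_m(L)$ valid for KZ-reduced bases (e.g., the Lagarias-Lenstra-Schnorr estimate $\|\vb_i^*\|^2\geq\tfrac{4}{i+3}\lambda_i(L)^2$), with $c(d)$ depending only on $d$. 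Setting $c_1(d):=c(d)$ then completes the proof.
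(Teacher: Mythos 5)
Your proof follows essentially the same strategy as the paper's: take a reduced basis $\vb_1,\ldots,\vb_d$ with Gram--Schmidt vectors $\vb_i^*$, set $S_m = \mathrm{span}(\vb_1,\ldots,\vb_{m-1})$, observe that $L$ lies in translates of $S_m$ indexed by integer combinations of the later basis vectors, and bound the separation from below via the Gram--Schmidt lengths and a reduction-theory estimate. The only divergence is the flavor of reduction: the paper uses an LLL-reduced basis and the bound $\min_{m \le i \le d}\|\vb_i^*\| \ge c_1(d)\lambda_m(L)$ coming from \cite[(1.7)]{LLL}, whereas you take a KZ-reduced basis, which gives the cleaner identity $\lambda_1(\pi(L)) = \|\vb_m^*\|$ and reduces the separation bound to a single quantity. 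One caution on the citation: the Lagarias--Lenstra--Schnorr inequality is stated for $\|\vb_i\|$, not for $\|\vb_i^*\|$; the bound $\|\vb_m^*\| \ge c(d)\lambda_m(L)$ you actually need does hold for KZ bases (it follows, for instance, from Minkowski's second theorem combined with the LLS upper bound $\|\vb_i^*\| \le \|\vb_i\| \le \sqrt{(i+3)/4}\,\lambda_i(L)$), but it is worth verifying the exact form rather than quoting it directly as the LLS estimate with a star.
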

\begin{proof}
We need the reduction theory on the space of lattices, for which we adopt the language and the results of \cite{LLL}; it is also an excellent reference for readers seeking more details on the subject.

Take a reduced basis $\{\vb_1, \ldots, \vb_d\}$ of $L$, whose Gram-Schmidt orthogonalization is given by $\{\vb_1^*, \ldots, \vb_d^*\}$ (see \cite[pp.516]{LLL} for both notions). We let $S_m = \mathrm{span}\{\vb_1,\ldots, \vb_{m-1}\}$. It is clear that $L$ is contained in the union of the translates of $S_m$ by the integer linear combinations of $\vb_m, \ldots, \vb_d$, and that any two translates are at least $\min_{m \leq i \leq d} \|\vb_i^*\|_2$ apart. But by \cite[(1.7)]{LLL} and the remark following \cite[Proposition 1.12]{LLL}, $\min_{m \leq i \leq d} \|\vb_i^*\|_2$ is bounded from below by $c_1(d)\lambda_m(L)$ for some $c_1(d)>0$.
\end{proof}

\begin{lemma} \label{prop:lower2}
Continue with the settings of Lemma \ref{prop:LLL}. Then there exists $c_2(d) > 0$ and $\vy \in \R^m$ such that $[0,c_2(d)\lambda_m(L)]^d + (\mathbf 0_n, \vy)$ is disjoint from any translate of $S_m$ by a point of $L$.
\end{lemma}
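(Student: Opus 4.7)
The plan is to project onto the orthogonal complement $S_m^\perp$ of the $(m-1)$-dimensional subspace $S_m$ from Lemma \ref{prop:LLL}, and then exploit the minimum-spacing estimate on the image $\pi(L)$. Writing $\pi : \R^d \to S_m^\perp$ for orthogonal projection, each translate $S_m + \ell$ with $\ell \in L$ collapses to the single point $\pi(\ell)$, and Lemma \ref{prop:LLL} tells us that distinct points of $\pi(L)$ lie at $L^2$-distance at least $c_1(d)\lambda_m(L)$. The box $B := [0, c_2\lambda_m(L)]^d + (\mathbf{0}_n, \vy)$ is disjoint from every translate of $S_m$ meeting $L$ exactly when $\pi(B) \cap \pi(L) = \emptyset$, so I only need to verify this projected disjointness.

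Take $c_2(d) := c_1(d)/(8\sqrt{d})$. Since $\pi$ is $1$-Lipschitz, $\pi([0, c_2(d)\lambda_m(L)]^d)$ has $L^2$-diameter at most $c_2(d)\sqrt{d}\,\lambda_m(L) = c_1(d)\lambda_m(L)/8$; because $\mathbf{0}_d$ lies in the cube, $\pi(B)$ is therefore contained in the closed ball of radius $c_1(d)\lambda_m(L)/8$ about $\pi((\mathbf{0}_n, \vy))$. It thus suffices to locate $\vy \in \R^m$ for which $\mathrm{dist}(\pi((\mathbf{0}_n, \vy)),\,\pi(L)) > c_1(d)\lambda_m(L)/8$. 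A short dimension count secures the freedom to do so: because $\ker(\pi) = S_m$ has dimension only $m-1$, the image $\pi(\{\mathbf{0}_n\} \times \R^m)$ has dimension at least $m - (m-1) = 1$ and so contains a line $\ell^* \subset S_m^\perp$, every point of which is realized as $\pi((\mathbf{0}_n, \vy))$ for some $\vy$.

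It remains to exhibit a point of $\ell^*$ at distance at least $c_1(d)\lambda_m(L)/4$ from $\pi(L)$. Fix any $q_0 \in \ell^*$ and let $p_j \in \pi(L)$ be a closest point to $q_0$; if $|q_0 - p_j| \geq c_1(d)\lambda_m(L)/4$ we are done, and otherwise I set $q_1 := q_0 + \tfrac{c_1(d)\lambda_m(L)}{2}\hat{v}$ for $\hat{v}$ a unit direction of $\ell^*$. The triangle inequality then gives $c_1(d)\lambda_m(L)/4 \leq |q_1 - p_j| \leq 3c_1(d)\lambda_m(L)/4$, and combined with the separation $|p_k - p_j| \geq c_1(d)\lambda_m(L)$ for every other $p_k \in \pi(L)$ this yields $|q_1 - p_k| \geq c_1(d)\lambda_m(L)/4$ as well. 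The main conceptual subtlety in the whole argument is ensuring that the dimension count actually delivers a line inside $\pi(\{\mathbf{0}_n\} \times \R^m)$ — which is precisely why Lemma \ref{prop:LLL} is phrased in terms of the $(m-1)$-dimensional space $S_m$ rather than an $m$-dimensional one; the rest amounts to simple triangle-inequality bookkeeping with the constant $c_1(d)$.
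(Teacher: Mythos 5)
Your proof is correct and rests on the same two ingredients as the paper's: the $c_1(d)\lambda_m(L)$ separation estimate from Lemma \ref{prop:LLL}, and the dimension count $\dim S_m = m-1 < m$, which gives the freedom to move $\vy$ off any given translate. The difference is purely presentational --- where the paper argues by a terse contradiction (``if no $\vy$ works, some $\vy$ must hit two distinct translates, contradicting the separation''), you project onto $S_m^\perp$ and construct the desired $\vy$ explicitly via a triangle-inequality argument along a line in $\pi(\{\mathbf 0_n\}\times\R^m)$, which is a clean and more self-contained way of making the paper's implicit continuity/pigeonhole reasoning precise.
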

\begin{proof}
Let us take $c_2(d) = (1-(2n)^{-1})\sqrt{d}^{-1}c_1(d)$, say. Since $S_m$ is $(m-1)$-dimensional, it is possible to choose $\vy$ so that $[0,c_2(d)\lambda_m(L)]^d + (\mathbf 0_n, \vy)$ is disjoint from any fixed translate of $S_m$ by a point of $L$. If it is impossible to choose $\vy$ such that it is disjoint from any translate of $S_m$ by $L$, then for some $\vy$ it must intersect with two distinct such translates. But this contradicts Lemma \ref{prop:LLL}, which states that they are at least $c_1(d)\lambda_m(L)$ apart in distance.
\end{proof}


\subsection{Zero-one laws on diagonal flows}

Theorems 1.7 and 8.7, and Proposition 7.1 of Kleinbock and Margulis \cite{KM98} imply the following Borel-Cantelli type statement, paraphrased in the form convenient for our later use.

\begin{theorem}[Kleinbock-Margulis \cite{KM98}, see also Kim-Skenderi {\cite[Proposition 1.11]{KS22}}] \label{thm:0-1}
Suppose $f: \R_{>0} \rightarrow \R_{>0}$ is an increasing function, and $U \in \SL(d,\R)$. Then for $1 \leq i \leq d-1$,
\begin{equation*}
\sum_{k=1}^\infty \frac{1}{f(k)} < \infty \Leftrightarrow \mbox{ for almost every $U$, } \lambda_i(A_RU) > (f(\log R))^{-\frac{1}{di}} \mbox{ for $R \gg_U 0$}.
\end{equation*}
Also, for $2 \leq j \leq d$,
\begin{equation*}
\sum_{k=1}^\infty \frac{1}{f(k)} < \infty \Leftrightarrow \mbox{ for almost every $U$, } \lambda_j(A_RU) < (f(\log R))^{\frac{1}{d(d-j+1)}} \mbox{ for $R \gg_U 0$}.
\end{equation*}

The same statements continue to hold, if $U$ is replaced by $U_B$ for an $m \times n$ matrix $B$, and ``almost every $U$'' is replaced by ``almost every $B$.''

\end{theorem}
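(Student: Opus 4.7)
My plan is to derive the stated zero-one law by combining cusp-volume estimates on the space of unimodular lattices $X_d = \SL(d,\R)/\SL(d,\Z)$ with a Borel-Cantelli type dichotomy for the diagonal flow $A_R$ on $X_d$. The key observation is that $\lambda_i(A_R U) = \lambda_i(A_R U \Z^d)$ is a function of the lattice $A_R U \Z^d \in X_d$, so the events ``$\lambda_i(A_R U) < r$'' and ``$\lambda_j(A_R U) > R'$'' correspond to the $A_R$-orbit visiting specific cuspidal target regions. The problem therefore reduces to a shrinking target problem for the diagonal flow.

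The first ingredient I would invoke is a classical cusp-volume estimate, in the spirit of Proposition 7.1 of Kleinbock-Margulis: for small $r > 0$ and $1 \leq i \leq d-1$,
\[
\mu\bigl(\{L \in X_d : \lambda_i(L) < r\}\bigr) \asymp_{d,i} r^{di},
\]
and the dual estimate for large $R'$ and $2 \leq j \leq d$,
\[
\mu\bigl(\{L \in X_d : \lambda_j(L) > R'\}\bigr) \asymp_{d,j} (R')^{-d(d-j+1)},
\]
where $\mu$ is the normalized Haar measure. The second follows from the first via Mahler duality: $\lambda_i(L)\lambda_{d-i+1}(L^*) \asymp_d 1$ and the involution $L \mapsto L^*$ preserves $\mu$. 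Substituting $r = (f(\log R))^{-1/(di)}$ and $R' = (f(\log R))^{1/(d(d-j+1))}$ respectively, both target sets have $\mu$-measure $\asymp 1/f(\log R)$, which is precisely the rate needed to match the series $\sum 1/f(k)$.

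Next I would apply Theorems 1.7 and 8.7 of Kleinbock-Margulis, which together give a Borel-Cantelli zero-one law for the orbit of almost every $U \in X_d$ under $A_R$ visiting a shrinking family of cuspidal targets $B_R \subseteq X_d$: the orbit visits $B_R$ for arbitrarily large $R$ iff $\sum_k \mu(B_{e^k})$ diverges (the sampling is logarithmic because $A_R$ is parametrized multiplicatively, so that $\log R$ is the natural flow time). Summing the measures above along $R = e^k$ recovers exactly $\sum_k 1/f(k)$, and matching convergence/divergence with the ``eventually outside $B_R$'' event yields the two claimed dichotomies for almost every $U \in X_d$.

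The final and most delicate step is to upgrade ``almost every $U \in \SL(d,\R)$'' to ``almost every $B \in M_{m \times n}(\R)$'' with $U = U_B$. Here the unipotent matrix $U_B$ parametrizes precisely the expanding horospherical subgroup for $A_R$: a direct computation gives $A_R U_B A_R^{-1} = U_{R^{d/(mn)} B}$, so the $B$-directions are exactly the unstable ones. I would handle this by the standard disintegration argument, slicing Haar measure on $X_d$ along a transversal to the horospherical foliation and applying Fubini, so that the full-measure statement on $X_d$ restricts to a full-measure statement in the $B$-parameter. This horospherical transfer is precisely the content of Section 8 of Kleinbock-Margulis and is cleanly packaged as \cite[Proposition 1.11]{KS22}. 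I expect this transfer --- rather than the cusp-volume estimates or the Borel-Cantelli argument themselves --- to be the main obstacle, since verifying the hypotheses (unipotent invariance, absolute continuity of the horospherical slicing, compatibility of the flow with the conditional measures) is where the real technical work lies.
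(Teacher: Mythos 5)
The paper does not prove this theorem; it quotes it as a black box, pointing to Theorems 1.7, 8.7 and Proposition 7.1 of Kleinbock--Margulis and to Proposition 1.11 of Kim--Skenderi. Your sketch correctly reconstructs the proof strategy underlying those references --- the cusp-volume asymptotics $\mu(\{\lambda_i < r\}) \asymp r^{di}$ together with the Mahler-duality trick to obtain the dual estimate for $\lambda_j$ large, the logarithmically-sampled Borel--Cantelli dichotomy for the $A_R$-flow, and the horospherical transfer from Haar measure on $X_d$ to Lebesgue measure in the $B$-parameter (your conjugation computation $A_R U_B A_R^{-1} = U_{R^{d/(mn)}B}$ is correct and does identify $U_B$ as the expanding horospherical direction) --- and you rightly flag the last step as the technically delicate one; so this is essentially the same approach as the one the paper implicitly relies on by citation.
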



\section{Proof of Theorem \ref{thm:main1} and its corollary}

\subsection{Proof of (a)} \label{sec:upper}

Take $U \in \SL(d,\R)$ and $R \in \R_{> 0}$, and let
\begin{equation*}
P = \left(\left(1+n^{-1}\right)\sqrt{d}\lambda_d(A_RU)\right)^n
\end{equation*}
(any choice slightly greater than $(\sqrt{d}\lambda_d(A_RU))^n$ will do). We determine $Q$ by $Q = PR$.

By Theorem \ref{thm:existence}, an arbitrary translate of $H = (0,P^\frac{1}{n}]^d \subseteq \R^d$ contains a point of $A_RU\Z^d$. This implies that an arbitrary translate of $A_PH = (0,1]^n \times (0, P^{\frac{d}{mn}}]^m$ contains a point of $A_PA_RU\Z^d = A_QU\Z^d$; equivalently, any translate of $A_PH$ contains a point of any translate of $A_QU\Z^d$. With this in mind, consider the set
\begin{equation*}
(0,1]^n \times \left( B_p^m(rP^\frac{d}{mn}) \cap \mathrm{cone}(S) \right)
\end{equation*}
for some $r > 0$, where $B_p^m(t) \subseteq \R^m$ here denotes the $L^p$-ball of radius $t$ at origin. If $r$ is sufficiently large, depending only on $p$ and $S$ (for instance, if $p = \infty$ and $S$ is the entire unit sphere, then we can take $r = 1/2$) then this set contains a translate of $A_PH$, and thus contains a point $A_QU\Z^d - (\mathbf 0_n, \vw)$ for any $\vw \in \R^m$.
Therefore
\begin{equation} \label{eq:upper}
\Phi(A_QU, \vw) < rP^\frac{d}{mn}
\end{equation}
for any $\vw \in \R^m$.

We specialize to the case $U = U_B$ and $\vw = Q^{\frac{1}{m}}\vv$. By Theorem \ref{thm:0-1}, for every increasing function $f$ satisfying
\begin{equation*}
\sum_{k=1}^\infty \frac{1}{f(k)} < \infty,
\end{equation*}
it follows that, for almost every $B$ and all sufficiently large $R$,
\begin{equation*}
\Phi(A_QU_B, Q^{\frac{1}{m}}\vv) < rP^\frac{d}{mn} < f(\log R)^\frac{1}{m},
\end{equation*}
by applying Theorem \ref{thm:0-1} with an appropriate constant multiple of $f$ if necessary.

At this point, observe that $P \geq 1$, since $\lambda_d(L) \geq 1$ for any lattice $L \subseteq \R^d$ of covolume $1$. It follows that $R \rightarrow \infty$ if and only if $Q \rightarrow \infty$, since $Q$ is bounded from both sides by functions increasing in $R$, and that
\begin{equation*}
f(\log R) = f(\log Q - \log P) \leq f(\log Q).
\end{equation*}

Summarizing the discussion so far, we conclude that
\begin{equation*}
\Phi(A_QU_B, Q^{\frac{1}{m}}\vv) < f(\log Q)^\frac{1}{m},
\end{equation*}
or equivalently
\begin{equation*}
\gamma_{(B;Q)}(\vv) < Q^{-\frac{1}{m}}\left(f(\log Q)\right)^{\frac{1}{m}},
\end{equation*}
for any $\vv \in \R^m$ all sufficiently large $Q$, depending on $p, S$, and $B$.

\subsection{Proof of (b)}

As earlier, set $R = QP^{-1} \in \R_{>0}$, but this time define
\begin{equation*}
P = \left(c_2(d)\lambda_m(A_RU_B)\right)^n, 
\end{equation*}
where $c_2(d)$ is as in Lemma \ref{prop:lower2}. Lemma \ref{prop:lower2} implies that $A_RU_B\Z^d$ has no vectors inside $(0,P^\frac{1}{n}]^d + (\mathbf 0_n, \vy)$ for some $\vy \in \R^m$, which implies in turn that $A_QU_B\Z^d$ has no vectors inside $(0,1]^n \times (0, P^\frac{d}{mn}]^m + (\mathbf 0_n, P^\frac{1}{m}\vy)$. This implies that
\begin{equation*}
\Phi(A_QU_B, \vw) > rP^\frac{d}{mn}
\end{equation*}
for some $\vw \in \R^m$ and $r > 0$ depending on $p$.\footnote{If some point of $K(B;Q)$ is close enough --- of distance $O(P^\frac{d}{mn})$ --- to $Q^{-\frac{1}{m}}\vw$ (mod $1$), one could demonstrate a lower bound on $\sup_{\vv \in K(B;Q)} \gamma_{(B;Q)}(\vv)$. However, it turns out that $K(B;Q)$ is not always dense enough for this.}

Invoking Theorem \ref{thm:0-1}, for every increasing function $f$ satisfying
\begin{equation*}
\sum_{k=1}^\infty \frac{1}{f(k)} < \infty,
\end{equation*}
it holds that, for almost every $B$ and all sufficiently large $R$,
\begin{equation} \label{eq:lower1}
\Phi(A_QU_B, \vw) > rP^\frac{d}{mn} > (f(\log R))^{-\frac{1}{m^2}}.
\end{equation}

We now use the assumption that $f(x) < x^k$ for some $k > 0$ and all sufficiently large $x$. Then for almost every $B$ and all sufficiently large $R$
\begin{equation} \label{eq:lower2}
(\log R)^{-\frac{k}{m^2}} < P < (\log R)^{\frac{k}{m^2}}
\end{equation}
holds: the upper bound can be derived from Theorem \ref{thm:0-1} for $m \neq 1$, and from Minkowski's bound $\lambda_1(L) \leq \sqrt{d}(\det L)^{1/d}$ for $m = 1$. Therefore, $Q = RP$ is bounded from both sides by increasing functions in $R$, and thus $R \rightarrow \infty$ if and only if $Q \rightarrow \infty$. In addition, \eqref{eq:lower2} makes it clear that $\log R < 2\log Q$ for all sufficiently large $Q$. Therefore, with $\vw = Q^{\frac{1}{m}}\vv$,
\begin{equation*}
\gamma_{(B;Q)}(\vv) = Q^{-\frac{1}{m}}\Phi(A_QU_B, \vw) > Q^{-\frac{1}{m}}\left(f(2\log Q)\right)^{-\frac{1}{m^2}}
\end{equation*}
for all sufficiently large $Q$, depending on $p$ and $B$. Since
\begin{equation*}
\sum_{k=1}^\infty \frac{1}{f(k)} < \infty \mbox{ if and only if } \sum_{k=1}^\infty \frac{1}{f(2k)} < \infty,
\end{equation*}
the desired inequality follows.

\subsection{Proof of (c)} \label{sec:lower}

The argument for the lower bound is only slightly different from that of the previous section. Again we write $R = QP^{-1}$, but this time
\begin{equation*}
P = \left((1-(2n)^{-1})\sqrt{d}^{-1}\lambda_1(A_RU_B)\right)^n.
\end{equation*}

By construction, $A_RU_B\Z^d$ has no nonzero vector inside the box $H = \{\vx \in \R^d : \|\vx\|_\infty \leq P^\frac{1}{n}\}$. Therefore,  $A_QU_B\Z^d$ has no nonzero vectors inside $A_PH = [-1,1]^n \times [-P^\frac{d}{mn}, P^\frac{d}{mn}]^m$. This implies that
\begin{equation*}
\Phi(A_QU_B, Q^\frac{1}{m}\vv) > rP^\frac{d}{mn}
\end{equation*}
for any $\vv \in K(B;Q)$, and some $r > 0$ depending on $p$. This is because, if $\vv = B\vk$ for some $\vk \in \Z^n \cap (0,Q^\frac{1}{n}]^n$, then we have
\begin{equation*}
\Phi(A_QU_B, Q^\frac{1}{m}\vv) = {\inf}^S_p\{\vy \in \R^m : (\vx,\vy) \in A_QU_B\Z^d, \vx \in (0,1]^n - Q^{-\frac{1}{n}}\vk\}.
\end{equation*}

By Theorem \ref{thm:0-1}, for every increasing function $f$ satisfying
\begin{equation*}
\sum_{k=1}^\infty \frac{1}{f(k)} < \infty,
\end{equation*}
it holds that, for almost every $B$ and all sufficiently large $R$,
\begin{equation*}
\Phi(A_QU_B, Q^\frac{1}{m}\vv) > rP^\frac{d}{mn} > (f(\log R))^{-\frac{1}{m}}.
\end{equation*}

The rest of the proof now proceeds as in part (b) earlier, with \eqref{eq:lower1} replaced by the above inequality, and \eqref{eq:lower2} replaced by
\begin{equation*}
(\log R)^{-\frac{k}{m}} \leq P \leq (1-(2n)^{-1})^n.
\end{equation*}

For the upper bound, notice that if we set instead
\begin{equation*}
P = \left(\lambda_1(A_RU_B)\right)^n,
\end{equation*}
then $A_RU_B\Z^d$ must have a nonzero vector inside the box $H$ defined above. Therefore $A_QU_B\Z^d$ must have a nonzero vector inside $A_PH$. The additional assumption on $S$ implies that $A_QU_B\Z^d$ also has a nonzero vector inside $A_PH \cap (\R^n \times \mathrm{cone}(S))$.\footnote{This implication uses the fact that lattices are symmetric with respect to sign change; thus it fails without the said assumption on $S$.} Thus
\begin{equation*}
\Phi(A_QU_B,Q^\frac{1}{m}\vv) \leq rP^\frac{d}{mn}
\end{equation*}
for some $\vv \in K(B;Q)$ and $r > 0$ depending on $p$. Minkowski's bound on $\lambda_1$ implies that $P^\frac{d}{mn} \leq \sqrt{d}^\frac{d}{m}$, yielding the desired bound. This concludes the proof of Theorem \ref{thm:main1}.

\subsection{Proof of Corollary}

The condition \eqref{eq:diophantine} is equivalent to
\begin{equation*}
\begin{pmatrix} \vn \\ \vm \end{pmatrix} \in
\begin{pmatrix} \mathrm{Id}_n & \\ -B & \mathrm{Id}_m \end{pmatrix}
\begin{pmatrix} NQ^\frac{1}{n}/2 & \\ & N(\varphi(Q))^\frac{1}{m} \end{pmatrix}
\cdot [-1,1]^d.
\end{equation*}
Therefore, a necessary and sufficient condition for \eqref{eq:diophantine} to have a solution in every congruence class of $\Z^d$ modulo $N$ is that
\begin{equation*}
\begin{pmatrix} \mathrm{Id}_n & \\ -B & \mathrm{Id}_m \end{pmatrix}
\begin{pmatrix} NQ^\frac{1}{n}/2 & \\ & N(\varphi(Q))^\frac{1}{m} \end{pmatrix}
\cdot [-1,1]^d \mbox{ (mod $N\Z^d$)}
\end{equation*}
contains every representative of $\Z^d/N\Z^d$. Normalizing, this is equivalent to the statement that 
\begin{equation} \label{eq:cdn}
\begin{pmatrix} \mathrm{Id}_n & \\ -B & \mathrm{Id}_m \end{pmatrix}
\begin{pmatrix} Q^\frac{1}{n}/2 & \\ & (\varphi(Q))^\frac{1}{m} \end{pmatrix}
\cdot [-1,1]^d \mbox{ (mod $\Z^d$)}
\end{equation}
contains every representative of $(N^{-1}\Z)^d/\Z^d$. We claim the following.
\begin{proposition} \label{prop:cdn}
\eqref{eq:cdn} contains every representative of $(N^{-1}\Z)^d/\Z^d$ for every integer $N > 0$ if and only if
\begin{equation} \label{eq:oink}
B\left( (0, Q^\frac{1}{n}]^n \cap \Z^n \right)
+ [(-\varphi(Q))^\frac{1}{m}, \varphi(Q))^\frac{1}{m}]^m \mbox{ (mod $\Z^m$)}
\end{equation}
covers all of $\R^m/\Z^m$. Furthermore, if \eqref{eq:oink} does not cover $\R^m/\Z^m$, then \eqref{eq:cdn} fails to contain every representative of $(N^{-1}\Z)^d/\Z^d$ for any sufficiently large $N$.
\end{proposition}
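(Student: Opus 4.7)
My plan is a two-stage reduction: first replace the condition that \eqref{eq:cdn} covers every representative of $(N^{-1}\Z)^d/\Z^d$ for every $N$ by the cleaner equality \eqref{eq:cdn}$=\R^d/\Z^d$, then verify the latter fiber-by-fiber. For the first stage, observe that \eqref{eq:cdn} is the continuous image of the compact cube $[-1,1]^d$ under an affine map followed by reduction mod $\Z^d$, hence a closed subset of $\R^d/\Z^d$; since $\bigcup_N (N^{-1}\Z)^d / \Z^d = \Q^d/\Z^d$ is dense, containing every rational representative for every $N$ is equivalent to equaling all of $\R^d/\Z^d$.

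To test when \eqref{eq:cdn} equals $\R^d/\Z^d$, I would project to the first $n$ coordinates --- where the projection surjects onto $\R^n/\Z^n$ as soon as $Q \geq 1$ --- and analyze the fiber over each $\vu \in \R^n/\Z^n$. Substituting $\vu' = Q^{1/n}\vx/2$ and $\vk = \vu' - \vu \in \Z^n$, one computes
\begin{equation*}
\mathrm{Fiber}(\vu) \;=\; -B\vu + \bigcup_{\vk \in \Z^n \cap (C - \vu)} \left(-B\vk + \varphi(Q)^{1/m}[-1,1]^m\right) \pmod{\Z^m},
\end{equation*}
where $C := [-Q^{1/n}/2,\, Q^{1/n}/2]^n$. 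Since translation by $-B\vu$ is a bijection of $\R^m/\Z^m$, the question of whether $\mathrm{Fiber}(\vu)$ covers $\R^m/\Z^m$ is independent of the external shift.

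The key combinatorial observation concerns integer points in the translated cube $C-\vu$: coordinate-wise, $\Z \cap [-Q^{1/n}/2 - u_i, \, Q^{1/n}/2 - u_i]$ is a consecutive integer interval of length $s_i \in \{\lfloor Q^{1/n}\rfloor, \lfloor Q^{1/n}\rfloor + 1\}$ determined by the fractional part of $u_i$, so $\Z^n \cap (C-\vu)$ is an integer-translate of $\prod_i \{1,\ldots,s_i\}$; moreover, every tuple $(s_1,\ldots,s_n) \in \{\lfloor Q^{1/n}\rfloor, \lfloor Q^{1/n}\rfloor + 1\}^n$ is realized by a suitable choice of $\vu \in [0,1)^n$ (elementary: each coordinate is determined independently). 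Applying $-B$ and reducing mod $\Z^m$, the integer-translate becomes a translation of $\R^m/\Z^m$ (again irrelevant to covering), and the symmetry $\vy \leftrightarrow -\vy$ of $[-1,1]^m$ lets us replace $-B\vk$ with $B\vk$. Since $\{1,\ldots,\lfloor Q^{1/n}\rfloor\}^n \subseteq \prod_i \{1,\ldots,s_i\}$ for every tuple, the tightest covering constraint arises from $s_1 = \cdots = s_n = \lfloor Q^{1/n}\rfloor$, which matches exactly \eqref{eq:oink} (in both cases $Q^{1/n}\notin\Z$ and $Q^{1/n}\in\Z$, the half-open cube $(0,Q^{1/n}]^n$ contributes the integer set $\{1,\ldots,\lfloor Q^{1/n}\rfloor\}^n$). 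Hence every $\mathrm{Fiber}(\vu) = \R^m/\Z^m$ iff \eqref{eq:oink} covers $\R^m/\Z^m$, giving the equivalence.

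The furthermore statement follows from the equivalence: if \eqref{eq:oink} fails to cover, the closed set \eqref{eq:cdn} omits a nonempty open subset of $\R^d/\Z^d$ containing a small open ball; once $1/N$ is smaller than this ball's radius, the grid $(N^{-1}\Z)^d/\Z^d$ necessarily intersects the ball, so \eqref{eq:cdn} fails to cover that representative. I anticipate the main technical obstacle to be the bookkeeping in the combinatorial step, particularly verifying that the minimum tuple $(\lfloor Q^{1/n}\rfloor, \ldots, \lfloor Q^{1/n}\rfloor)$ is realizable by some $\vu \in [0,1)^n$, and handling the slight adjustment needed when $Q^{1/n}\in\Z$; both are routine but require care with fractional parts.
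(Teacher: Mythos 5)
Your proof is correct and follows essentially the same route as the paper's: project to the first $n$ coordinates, show each fiber contains a (translated, possibly reflected) copy of \eqref{eq:oink} because the integer set $\{1,\ldots,\lfloor Q^{1/n}\rfloor\}^n$ embeds in every $\prod_i\{1,\ldots,s_i\}$, and note that for an open set of $\vu$ the fiber \emph{equals} such a copy, which gives both the ``only if'' and the ``furthermore.'' The one presentational difference is that you make the reduction from ``contains every representative of $(N^{-1}\Z)^d/\Z^d$ for every $N$'' to ``equals $\R^d/\Z^d$'' explicit at the outset via compactness of \eqref{eq:cdn} and density of $\Q^d/\Z^d$, whereas the paper reaches the same conclusion by exhibiting an open subset of the complement of \eqref{eq:cdn} in its final continuity argument; your opening is a bit cleaner, but the substance of the two proofs is the same.
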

\begin{proof}
Suppose first that \eqref{eq:oink} covers all of $\R^m/\Z^m$. We claim that \eqref{eq:cdn} covers $\R^d/\Z^d$, which proves the ``if'' part. Fix a representative $\vx \in \R^n/\Z^n$. Then the ``fiber'' $\mathcal F_\vx$ of \eqref{eq:cdn} over $\vx$, that is, the set of all $\vy \in \R^m/\Z^m$ such that $(\vx,\vy)$ is contained in $\eqref{eq:cdn}$ is given by
\begin{equation*}
\mathcal F_\vx = -B\left( [-Q^\frac{1}{n}/2, Q^\frac{1}{n}/2]^n \cap (\vx + \Z^n) \right)
+ [(-\varphi(Q))^\frac{1}{m}, \varphi(Q))^\frac{1}{m}]^m \mbox{ (mod $\Z^m$)}.
\end{equation*}
On the other hand,
\begin{align*}
&(0,Q^\frac{1}{n}]^n \cap \Z^n \\
&= (-Q^\frac{1}{n}/2, Q^\frac{1}{n}/2]^n \cap (\Z - Q^\frac{1}{n}/2)^n + (Q^\frac{1}{n}/2, \ldots, Q^\frac{1}{n}/2) \\
&\subseteq (-Q^\frac{1}{n}/2, Q^\frac{1}{n}/2]^n \cap (\vx + \Z^n) + (Q^\frac{1}{n}/2, \ldots, Q^\frac{1}{n}/2) - \vx',
\end{align*}
where $\vx'$ is the representative of $(Q^\frac{1}{n}/2, \ldots, Q^\frac{1}{n}/2) + \vx \mbox{ (mod $1$)}$ in $(-1,0]^n$. Hence $-\mathcal F_\vx$ contains a translate of \eqref{eq:oink}, proving the claim. In fact, they are identical provided $\vx' \in (-1 + Q^\frac{1}{n}/2 - \lfloor Q^\frac{1}{n}/2 \rfloor, 0)^n$ --- this will be used to prove the ``only if'' part below.

Thus suppose this time that \eqref{eq:oink} does not cover all of $\R^m/\Z^m$; the uncovered part must have nonempty interior. Moreover, by the remark just made earlier, there exists an open subset $U \subseteq \R^n$ such that, for all $\vx \in U$, $\mathcal F_\vx$ is a translate of \eqref{eq:oink} by a vector depending continuously on $\vx$. This implies that \eqref{eq:cdn} leaves an open subset of $\R^d/\Z^d$ uncovered. Thus for all sufficiently large $N$, it leaves some elements of $(N^{-1}\Z)^d/\Z^d$ uncontained. This completes the proof.
\end{proof}

It is clear that \eqref{eq:oink} covers $\R^m/\Z^m$ if and only if $\varphi(Q)^\frac{1}{m} \geq \sup_{\vv \in \R^m} \gamma_{(B;Q)}(\vv)$ for $p = \infty$ and $S \subseteq \R^m$ the entire unit sphere. Therefore Theorem \ref{thm:main1} and Proposition \ref{prop:cdn} together imply the Corollary.

\newpage

\end{document}